\def\Lip{\operatorname{Lip}}
\newtheorem{Theorem}{Theorem}[section]
\newtheorem{Proposition}[Theorem]{Proposition}
\newtheorem{Lemma}[Theorem]{Lemma}
\newtheorem{Corollary}[Theorem]{Corollary}
\theoremstyle{definition}
\theoremstyle{definition}
\theoremstyle{remark}
\def \cal{\mathcal}
\def\Lip{\text{Lip}}
\def\eps{\varepsilon}
\def\Ndb{\mathbb N}
\def\Rdb{\mathbb R}
\def\Zdb{\mathbb Z}
\def\eps{\varepsilon}
\def\Lip{\text{Lip}}
\newcommand{\R}{\ensuremath{\mathbb{R}}}
\newcommand{\bib}{\bibitem}
\begin{document}

\title{Approximation and Schur properties for Lipschitz free spaces over compact metric spaces}

\author{P. H\'{a}jek}
\address{Mathematical Institute, Czech Academy of Science, \v{Z}itn\'{a} 25, 115 67 Praha 1,
Czech Republic, and Department of Mathematics, Faculty of Electrical Engineering, Czech Technical University in Prague, Zikova 4, 160 00, Praha, Czech Republic.}
\email{hajek@math.cas.cz}

\author{G. Lancien}

\address{Universit\'{e} de Franche-Comt\'{e}, Laboratoire de Math\'{e}matiques UMR 6623,
16 route de Gray, 25030 Besan\c{c}on Cedex, France.}
\email{gilles.lancien@univ-fcomte.fr}

\author{E. Perneck\'{a}}
\address{Universit\'{e} de Franche-Comt\'{e}, Laboratoire de Math\'{e}matiques UMR 6623,
16 route de Gray, 25030 Besan\c{c}on Cedex, France and Institute of Mathematics, Polish Academy of Sciences, ul. \'{S}niadeckich 8, 00-956 Warszawa, Poland.}
\email{pernecka@karlin.mff.cuni.cz}



\subjclass[2010]{Primary 46B20; Secondary 46B80 }
\thanks{The first author was supported by the grant GACR 201/11/0345}

\keywords{Lipschitz free spaces, approximation property, Schur property, Cantor space}

\maketitle

\begin{abstract} We prove that for any separable Banach space $X$, there exists a compact metric space which is homeomorphic to the Cantor space and whose Lipschitz-free space contains a complemented subspace isomorphic to $X$. As a consequence we give an example of a compact metric space which is homeomorphic to the Cantor space and whose Lipschitz-free space fails the approximation property and we prove that there exists an uncountable family of topologically equivalent distances on the Cantor space whose free spaces are pairwise non isomorphic. We also prove that the free space over a countable compact metric space has the Schur property. These results answer questions by G. Godefroy.
\end{abstract}

\markboth{}{}

\section{Introduction}
Let $(M,d)$ be a metric space equipped with a distinguished point denoted $0$. We denote by $\Lip_0(M)$ the Banach space of all real-valued Lipschitz functions on $M$ that vanish at $0$ endowed with its natural norm:
$$\forall f\in \Lip_0(M),\ \ \|f\|_{\Lip}=\sup\left\{\frac{|f(x)-f(y)|}{d(x,y)},\ x,y\in M,\ x\neq y\right\}.$$
The Dirac map $\delta: M \to \text{Lip}_0(M)^*$ defined by $\langle f, \delta(x)\rangle =f(x)$ for $f\in \text{Lip}_0(M)$ and $x\in M$ is an isometric
embedding from $M$ into $\text{Lip}_0(M)^*$. The closed linear span of $\{\delta(x),\ x\in M\}$ in $\Lip_0(M)^*$ is denoted $\cal F (M)$ and called the \emph{Lipschitz-free space over} $M$ (free space in short). The space $\cal F (M)$ is an isometric predual of $\Lip_0(M)$. The fundamental property of the free spaces is that any Lipschitz function between metric spaces extends, via the $\delta$ maps, to a continuous linear map between the corresponding free spaces (see \cite{GK}, \cite{K} or \cite{W}). This feature makes it very natural to study the linear structure of the free spaces. This direction of research, which has become very active, was initiated in a paper by G. Godefroy and N. Kalton \cite{GK}, and soon after was continued in an article by N. Kalton \cite{K}. In \cite{GK}, the authors concentrated on the study of ${\cal F}(X)$, when $X$ is a Banach space. In that situation, there is another fundamental object, namely the \emph{barycenter map} $\beta$, which is defined to be the unique bounded
linear map from $\cal F(X)$ onto $X$ such that for all $x\in X$, $\beta\delta(x)=x$. Note that $\|\beta\|=1$. One of the main results in \cite{GK} is that if $X$ is a separable Banach space, then there exists a linear isometry $U: X \to \cal F(X)$ such that $\beta U=Id_X$. Let us point out that it readily follows that $P=U\beta$ is a non expansive projection from $\cal F(X)$ onto $U(X)$.\\
Let us now recall that a Banach space $X$ has the approximation property (AP in short) if for any compact subset $K$ of $X$ and any $\eps>0$, there exists a bounded finite rank operator $T$ on $X$ such that $\|T(x)-x\|\le \eps$ for all $x\in K$. For $\lambda\ge 1$, $X$ has the $\lambda$-bounded approximation property ($\lambda$-BAP) if for any compact subset $K$ of $X$ and any $\eps>0$, there exists a bounded finite rank operator $T$ on $X$ such that $\|T\|\le \lambda$ and $\|T(x)-x\|\le \eps$ for all $x\in K$. We say that $X$ has the bounded approximation property (BAP) if it has the $\lambda$-BAP for some $\lambda\ge 1$ and that it has the metric approximation property (MAP) if it has  the 1-BAP. Another important result of \cite{GK} is that a Banach space $X$ has the $\lambda$-BAP if and only if ${\cal F}(X)$ has the $\lambda$-BAP. It follows that the BAP is stable under Lipschitz isomorphisms. It is then natural to study these approximation properties for free spaces over other metric spaces and in
particular over compact metric spaces. Let us summarize what is known on this subject. G. Godefroy and N. Ozawa proved in \cite{GO} that there exists a compact convex subset $K$ of a Banach space $X$ such that $\cal F(K)$ fails the AP. It is proved in \cite{LP} that the free space over a doubling metric space has the BAP. In \cite{D1} A. Dalet showed that the free space over a countable compact metric space has the MAP (see also \cite{D2} for the extension of this result to the case of countable proper metric spaces). In \cite{PS}, R. Smith and the third author proved, in particular, that the free space of a compact convex subset of a finite dimensional normed space has the MAP. Finally, let us mention that M. C\'{u}th and M. Doucha recently obtained in \cite{CD} that the free space of a separable ultrametric space has the MAP.

In \cite{GO}, G. Godefroy and N. Ozawa also proved that if $K$ is a ``small Cantor set'', then $\cal F(K)$ has the MAP. Then G. Godefroy raised the question whether the free space over a perfect totally disconnected compact metric space (or equivalently a metric space homeomorphic to the Cantor space $C$) has the BAP \cite{G2} (this question was already mentioned in \cite{CD}). In section \ref{Cantor} we prove the existence of a compact metric space $K$, which is homeomorphic to $C$ and such that $\cal F(K)$ fails the AP. Our proof is based on an adaptation of the construction of a linear isometric lifting of the barycenter map $\beta$ when $X$ is a separable Banach space (see \cite{GK} for the original proof). We will follow the elementary approach given in \cite{G}. This construction will also allow us to build an uncountable family of compact metric spaces, all homeomorphic to the Cantor space, but whose free spaces are pairwise non isomorphic.

Let us finally recall that a Banach space $X$ is weakly sequentially complete if a sequence $(x_n)_{n=0}^\infty$ in $X$ is weakly converging whenever the sequences $(x^*(x_n))_{n=0}^\infty$ are converging in $\Rdb$ for all $x^*\in X^*$. Then $X$ has the Schur property if every weakly converging sequence in $X$ is norm converging. Clearly, a Banach space with the Schur property is weakly sequentially complete. Recently, M. C\'{u}th, M. Doucha and P. Wojtaszczyk proved that $\cal F(\Rdb^n)$ is weakly sequentially complete. Among other questions, G. Godefroy proposed to study the weak sequential completeness of free spaces over compact metric spaces and specifically asked whether the free space over a countable compact metric space is weakly sequentially complete. In section \ref{Schur}, we address this question and show that the free space of any countable compact (or even proper) metric space has the Schur property.

\section{Free spaces over Cantor sets}\label{Cantor}

We start by stating the main result of this section.

\begin{Theorem}\label{main} Let $X$ be a separable Banach space and $\eps>0$. Then there exists a compact subset $K$ of $X$ such that $K$ is homeomorphic to the Cantor space $C$, a linear embedding $R$ from $X$ into $\cal F (K)$ such that $\|R\|\,\|R^{-1}\|\le 1+\eps$ and a projection $P$ from $\cal F (K)$ onto $R(X)$ with $\|P\|\le 1+\eps$.
\end{Theorem}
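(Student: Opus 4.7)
The plan is to adapt the Godefroy--Kalton construction of the linear isometric lifting of the barycenter map, in the elementary form of \cite{G}, while confining all supporting Dirac masses to a common compact subset $K\subset X$ that one can then arrange to be homeomorphic to $C$. The argument splits into three natural steps: (a) build $K$ together with a family of fine nets; (b) define a linear embedding $R:X\to\cal F(K)$ satisfying $\beta\circ R=\text{Id}_X$ and $\|R\|\le 1+\eps$; (c) set $P:=R\beta$.

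For (a), fix $\eta\in(0,1)$ small enough that $(1+\eta)/(1-\eta)\le 1+\eps$. For every $n\ge 1$ select a finite $\eta^n$-net $F_n$ of the ball $\eta^{n-1}B_X\subset X$, and assemble $K_0:=\{0\}\cup\bigcup_{n\ge 1}F_n$. Since $F_n\subset \eta^{n-1}B_X$ and $\eta^{n-1}\to 0$, $K_0$ is compact with $0$ as unique accumulation point, so $K_0$ is merely a convergent sequence topologically. To upgrade $K_0$ to a Cantor-homeomorphic compactum, replace each point $y\in K_0$ by a rescaled copy of $C$ lying along some unit direction in $X$, of diameter much smaller than the local gap near $y$. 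The resulting $K\supset K_0$ remains compact, is perfect, is totally disconnected, hence homeomorphic to $C$, and every net $F_n$ still sits inside $K$.

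For (b), given $x\in X\setminus\{0\}$, let $n_0\in\Zdb$ be the integer with $\|x\|\in(\eta^{n_0},\eta^{n_0-1}]$, and inductively pick $y_k(x)\in F_{n_0+k-1}$ satisfying $\|x-y_1(x)-\cdots-y_k(x)\|\le\eta^{n_0+k-1}$. Then $x=\sum_{k}y_k(x)$ with $\|y_k(x)\|\le\eta^{n_0+k-2}(1+\eta)$, and
\[
R(x):=\sum_{k\ge 1}\delta(y_k(x))\in\cal F(K)
\]
is a norm-convergent series with $\|R(x)\|_{\cal F(K)}\le \sum_k\|y_k(x)\|\le \bigl((1+\eta)/(1-\eta)\bigr)\|x\|\le (1+\eps)\|x\|$, using $\|\delta(y)\|_{\cal F(K)}=d(y,0)=\|y\|$. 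Moreover $\beta R(x)=\sum_k y_k(x)=x$ because $\beta$ is bounded linear and $\beta\delta(y)=y$; in particular $R$ is injective with $\|R^{-1}\|=\|\beta|_{R(X)}\|\le 1$. To make $R$ truly \emph{linear}, one follows the elementary device in \cite{G}: perform the selection $y_k(\cdot)$ coherently along a countable dense $\Qdb$-linear subspace $V\subset X$ so that it is $\Qdb$-linear on $V$, then extend to a bounded linear operator on $X$ by density, using the uniform estimate $\|R|_V\|\le 1+\eps$.

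For (c), $P:=R\beta$ is a bounded operator on $\cal F(K)$ with $P^2=R(\beta R)\beta=R\beta=P$, range $R(X)$, and $\|P\|\le \|R\|\|\beta\|\le 1+\eps$. The main obstacle is obtaining a genuinely linear $R$: the iterative pointwise selection of $y_k(x)$ is intrinsically non-linear in $x$, so the technical heart of the argument is organizing the finite nets $F_n$ and the selection rule carefully enough that one can first define $R$ linearly on a dense subspace with a uniform norm bound, before extending by continuity while remaining in $\cal F(K)$.
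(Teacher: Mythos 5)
There is a genuine gap, and it sits exactly where you locate it yourself: the linearity of $R$. A map of the form $x\mapsto\sum_k\delta(y_k(x))$ cannot be made linear, nor even $\Qdb$-linear or odd, no matter how coherently the greedy selections are organized. Indeed, the Dirac masses $\{\delta(y):y\in K\setminus\{0\}\}$ are linearly independent in $\cal F(K)$, and every $R(x)$ of your form is a sum of Dirac masses with coefficients $+1$; hence $R(x)+R(-x)$ is a nonzero positive combination of Dirac masses whenever $x\neq 0$, while linearity would force it to equal $R(0)=0$. The same coefficient count already kills $R(2x)=2R(x)$. So the proposed device of a ``coherent $\Qdb$-linear selection along a dense subspace'' does not exist, and it is also not what is done in \cite{G}: the elementary approach there (and in \cite{GK}) defines the lifting on a fixed spanning biorthogonal sequence $(x_n)$ by \emph{averaging differences of Dirac masses}, namely $\int\big(\delta(x_n+\sum_{k\neq n}t_kx_k)-\delta(\sum_{k\neq n}t_kx_k)\big)\,dt$, and extends linearly on finite spans; linearity is then automatic, and the entire difficulty is transferred to proving that this linear map is bounded (in fact an isometry), which is done by a convolution/differentiation argument.

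The paper's proof follows precisely that route and gets the Cantor structure of $K$ for free: one takes a biorthogonal system with $\|x_n\|=2^{-n}$ and dense span, chooses Cantor subsets $C_n\subset[0,1]$ of Lebesgue measure $1-a_n$ with $\prod_n(1-a_n)$ close to $1$, lets $K$ be the image of $\prod_n C_n$ under $t\mapsto\sum_n t_nx_n$, and defines $R_N(x_n)$ by averaging the Dirac differences over $\prod_{k\neq n}C_k$ only (suitably renormalized), so that all supports stay in $K$. The bound $\|R_N\|\le 1+\eps$ is then obtained by comparing $R_N$ with the Godefroy--Kalton isometry $U_N$ (the same average over the full cube $[0,1]^{N-1}$), the error being controlled by the measure defects $a_n$; finally one passes to the limit in $N$. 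Your step (a) (nets fattened into small Cantor sets) and step (c) ($P=R\beta$, which is fine once $\beta R=\operatorname{Id}_X$ and $\|R\|\le 1+\eps$ are in hand) are unobjectionable, but without the averaging-of-differences mechanism there is no linear $R$, so the proof as proposed does not go through.
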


As an immediate Corollary, we obtain:

\begin{Corollary} There exists a compact metric space $K$ homeomorphic to the Cantor space $C$ such that $\cal F(K)$ fails the approximation property.
\end{Corollary}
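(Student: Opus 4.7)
The plan is to derive the Corollary essentially for free from the Main Theorem, combined with two standard facts from the theory of the approximation property. First, there exist separable Banach spaces failing the approximation property: Enflo's celebrated example, and the later refinements by Davie and Szankowski producing such spaces even as subspaces of $c_0$ or $\ell_p$ for $p\neq 2$, supply a plentiful reservoir. Second, the approximation property passes to complemented subspaces: if $Z$ is complemented in $Y$ via a bounded projection and $Y$ has the AP, then $Z$ inherits the AP. Equivalently in the contrapositive form I want, a Banach space containing a complemented subspace that fails AP must itself fail AP. The AP is also preserved under linear isomorphism.

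With these ingredients the argument is a one-line application of Theorem \ref{main}. I would let $X$ be any separable Banach space failing the approximation property, and fix an arbitrary $\eps>0$ (any value, say $\eps=1$, suffices; the quantitative bound on $\|R\|\,\|R^{-1}\|$ plays no role here). Theorem \ref{main} then furnishes a compact set $K\subset X$ homeomorphic to the Cantor space $C$, a linear embedding $R\colon X\to\mathcal F(K)$, and a bounded projection $P\colon \mathcal F(K)\to R(X)$. Since $R$ is an isomorphism between $X$ and $R(X)$, the subspace $R(X)$ fails the AP, and since $R(X)$ is complemented in $\mathcal F(K)$, the whole space $\mathcal F(K)$ fails the AP, as claimed.

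There is really no obstacle in this deduction; the entire content of the Corollary is packaged inside Theorem \ref{main}. The only thing to keep in mind is that the metric on $K$ is the one inherited from $X$, so $K$ is indeed a compact metric space in the relevant sense, and the homeomorphism with the Cantor space $C$ ensures that $K$ is a nontrivial perfect totally disconnected compactum — precisely the setting of Godefroy's question. The genuine work lies in proving Theorem \ref{main}, whose proof will occupy the remainder of Section \ref{Cantor}; once that theorem is in hand, the Corollary is immediate.
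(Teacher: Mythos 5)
Your proposal is correct and follows exactly the same route as the paper: take Enflo's separable space failing the AP, apply Theorem \ref{main} to obtain $K\subset X$ homeomorphic to $C$ with $X$ isomorphic to a complemented subspace of $\mathcal F(K)$, and invoke the fact that the AP passes to complemented subspaces. No discrepancies to report.
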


\begin{proof} Let $X$ be a separable Banach space failing the approximation property. The existence of such a space was proved by P. Enflo in \cite{E}. Then Theorem \ref{main} insures the existence of a compact subset $K$ of $X$ so that $K$ is homeomorphic to $C$ and $X$ is isomorphic to a complemented subspace of $\cal F(K)$. Therefore $\cal F(K)$ fails the approximation property.
\end{proof}

\begin{proof}[Proof of Theorem \ref{main}] The idea of the proof is to build a ``very fat spanning Cantor set'' in any separable Banach space. So let $X$ be a separable Banach space and $\eps>0$. We choose a small enough $\eta \in (0,1/2)$. The choice of $\eta$ will be made precise along our construction. It follows from the work of I. Singer \cite{S} (see also \cite{HMVZ} Corollary 1.26 page 12) that there exists $(x_n)_{n=1}^\infty$  in $X$ and $(x_n^*)_{n=1}^\infty$ in $X^*$ such that the linear span of $\{x_n,\ n\ge 1\}$ is dense in $X$ and so that for all $n\ge 1$, $\|x_n\|=2^{-n}=x_n^*(x_n)$, $\|x_n^*\|\le 1+\eta$ and for $n\neq m$, $x_m^*(x_n)=0$.\\
We now pick a sequence $(a_n)_{n=1}^\infty$ in $(0,1)$ such that $a_n\le \eta 2^{-{n}}$ for all $n\ge1$, and $\prod_{n=1}^\infty (1-a_n)=\Pi\ge 1-\eta$.\\
For all $n\ge 1$, we choose a subset $C_n$ of $[0,1]$ which is homeomorphic to the Cantor space $C$ and such that its Lebesgue measure satisfies $\lambda(C_n)= 1-a_n$. We also ask that $\{0,1\}\subset C_n$.\\
For $N\ge 1$ and $1\le n\le N$, we denote $A_N^n=\{k\neq n,\ 1\le k\le N\}$ and
$$D_N=\prod_{k=1}^N C_k,\ \ D_N^n=\prod_{k\in A_N^n}C_k,\ \
\Pi_N=\prod_{k=1}^N (1-a_k)\ \ {\rm and}\ \ \Pi_N^n=\prod_{k\in A_N^n} (1-a_k).$$
We also denote $\lambda_N$ the Lebesgue measure on $[0,1]^N$ and $\lambda_N^n$ the Lebesgue measure on $[0,1]^{A_N^n}$.\\
Then we define $f:[0,1]^\Ndb \to X$ by $f(t)=\sum_{n=1}^\infty t_nx_n$, for $t=(t_n)_{n=1}^\infty \in [0,1]^\Ndb$. Note that $f$ is one to one and continuous. It follows that $K=f(\prod_{n=1}^\infty C_n)$ is homeomorphic to $C$. We will show that, with the right choice of $\eta$, $K$ will provide us with the desired example.

For $N\ge 1$, we denote $E_N$ the linear span of $\{x_1,..,x_N\}$ and define $R_N$ to be the linear map from $E_N$ to $\cal F(K)$ such that
$$\forall n\le N \ \ R_N(x_n)=\frac{1}{\Pi_N^n}\int_{D_N^n} \big(\delta(x_n+\sum_{k\in A_N^n}t_kx_{k})-\delta(\sum_{k\in A_N^n}t_{k}x_{k})\big)\,d\lambda_N^n(t).$$
Note that for all $x\in E_N$, $\beta R_N(x)=x$ and for all $n\le N$, $\|R_N(x_n)\|\le \|x_n\|$. Then we consider $S_N:E_N\to \cal F(K)$ to be the linear map such that
$$\forall n\le N \ \ S_N(x_n)=\Pi_N^nR_N(x_n)$$
and $U_N:E_N\to \cal F(X)$ to be the linear map such that
$$\forall n\le N \ \ U_N(x_n)=\int_{[0,1]^{A_N^n}} \big(\delta(x_n+\sum_{k\in A_N^n}t_{k}x_{k})-\delta(\sum_{k\in A_N^n}t_{k}x_{k})\big)\,d\lambda_N^n(t).$$
By the choice of $(a_n)_{n=1}^\infty$,
\begin{equation}\label{eq1}\forall n\le N,\ \ \|\Pi_NR_N(x_n)-S_N(x_n)\|=|\Pi_N-\Pi_N^n|\,\|R_N(x_n)\|\le \eta 2^{-n}\|x_n\|.
\end{equation}
Consider now $x=\sum_{n=1}^N \alpha_nx_n \in E_{N}$. Note that for all $n\le N$, $x_n^*(x)=\alpha_n 2^{-n}$, which implies that $|\alpha_n|\le 2^n(1+\eta)\|x\|$. Then, we deduce from (\ref{eq1}) that
\begin{equation}\label{eq2}
\|\Pi_NR_N(x)-S_N(x)\|\le \sum_{n=1}^{{N}}|\alpha_n|\eta 2^{-n}\|x_n\|\le \eta(1+\eta)\|x\|\le 2\eta \|x\|.
\end{equation}
We have shown that $\|\Pi_NR_N-S_N\|\le 2\eta$.

Assume now that $f\in \text{Lip}_0(X)$ is such that its restriction to $E_N$, that we denote $f_N$, is continuously differentiable. Let $x=\sum_{n=1}^N \alpha_nx_n \in E_{N}$. Then
$$\langle f,S_N(x)\rangle -\int_{{D_N}} \langle \nabla f_N\big(\sum_{k=1}^N t_kx_k\big),x\rangle\,d\lambda_N(t)= $$
$$ \sum_{n=1}^N \alpha_n\int_{{D_N^n}}\Big(\int_{[0,1]\setminus C_n}\langle \nabla f_N\big(\sum_{{k}=1}^N t_kx_k\big),x_n\rangle\,{dt_n}\Big)\,{d\lambda_N^n(t)}.$$
It follows that
$$\big|\langle f,S_N(x)\rangle -\int_{{D_N}} \langle \nabla f_N\big(\sum_{k=1}^N t_kx_k\big),x\rangle\,d\lambda_N(t)\big| \le \sum_{n=1}^N a_n|\alpha_n|\,\|f\|_{\text{Lip}}\|x_n\|$$
$$\le (1+\eta)\|f\|_{\text{Lip}}\|x\|\sum_{n=1}^N a_n\le \eta(1+\eta)\|f\|_{\text{Lip}}\|x\| \le 2\eta\|f\|_{\text{Lip}}\|x\|.$$
We also have that
$$\big|\int_{[0,1]^N} \langle \nabla f_N\big(\sum_{k=1}^N t_kx_k\big),x\rangle\,d\lambda_N(t)-\int_{{D_N}} \langle \nabla f_N\big(\sum_{k=1}^N t_kx_k\big),x\rangle\,d\lambda_N(t)\big|$$
$$\le (1-\Pi_N)\|f\|_{\text{Lip}}\|x\|\le \eta \|f\|_{\text{Lip}}\|x\|.$$
Hence
\begin{equation}\label{eq3}
\big|\langle f,S_N(x)\rangle -\int_{[0,1]^N} \langle \nabla f_N\big(\sum_{k=1}^N t_kx_k\big),x\rangle\,d\lambda_N(t)\big| \le 3\eta\|f\|_{\text{Lip}}\|x\|.
\end{equation}
Then we can use a now classical approximation argument, using the convolution by a smooth approximation of unity in $E_N$, as it is presented in \cite{G}, to deduce from (\ref{eq3}) that $\|S_N-U_N\|\le 3\eta$ and therefore that $\|\Pi_NR_N-U_N\|\le 5\eta$. Since we know from \cite{GK} that $U_N$ is an isometry, we get that
$$\|R_N\|\le \frac{1+5\eta}{\Pi_N}\le \frac{1+5\eta}{1-\eta}\le 1+\eps,$$
if $\eta$ was initially chosen small enough.

The last step of the proof is to define $R$ as a ``limit'' of  the sequence $(R_N)_{N=1}^\infty$. Indeed, for all $n\le N$:
$$R_{N+1}(x_n)=\frac{1}{\Pi_{N+1}^n}\int_{D_{N+1}^n} \big(\delta(x_n+\sum_{k\in A_{N+1}^n}t_{k}x_{k})-\delta(\sum_{k\in A_{N+1}^n}t_{k}x_{k})\big)\,d\lambda_{N+1}^n(t)$$
and
$$R_N(x_n)=\frac{1}{\Pi_N^n}\int_{{D_N^n}} \big(\delta(x_n+\sum_{k\in A_N^n}t_{k}x_{k})-\delta(\sum_{k\in A_N^n}t_{k}x_{k})\big)\,d\lambda_N^n(t)$$
$$=\frac{1}{\Pi_{N+1}^n}\int_{D_{N+1}^n} \big(\delta(x_n+\sum_{k\in A_N^n}t_{k}x_{k})-\delta(\sum_{k\in A_N^n}t_{k}x_{k})\big)\,d\lambda_{N+1}^n(t).$$
We deduce that
$$\|R_{N+1}(x_n)-R_N(x_n)\|\le {2\|x_{N+1}\|\le 2^{-N}}.$$
It follows that for any $x\in E_N$, the sequence $(R_l(x))_{l=N}^\infty$ is norm convergent in $\cal F(K)$. We denote $R(x)$ its limit. Then $R$ extends to a bounded linear map $R:X\to \cal F(K)$ such that $\|R\|\le 1+\eps$ and for all $x\in X$, $\beta R(x)=x$. Since $\|\beta\|\le 1$, we obtain that $R$ is a linear embedding from $X$ into $\cal F(K)$ such that $$\forall x\in X,\ \ \|x\|\le \|R(x)\|\le (1+\eps)\|x\|.$$
Denote now $\beta_K$ the restriction of $\beta$ to $\cal F(K)$. It is easily checked that $P=R\beta_K$ is a bounded projection from $\cal F(K)$ onto $R(X)$, with $\|P\|\le 1+\eps$. This concludes our proof.

\end{proof}

Let us denote $\omega_1$ the first uncountable ordinal. As another application of Theorem \ref{main} we obtain the following result.

\begin{Corollary} There exists a family $(X_\alpha)_{\alpha <\omega_1}$ of separable Banach spaces and a family $(K_\alpha)_{\alpha <\omega_1}$ such that

\smallskip
(i) For all $\alpha<\omega_1$, $K_\alpha$ is a compact subset of $X_\alpha$ which is homeomorphic to the Cantor space $C$.

\smallskip
(ii) For all $\alpha<\omega_1$, $X_\alpha$ is isomorphic to a complemented subspace of $\cal F(K_\alpha)$.

\smallskip
(iii) For all $\beta<\alpha<\omega_1$, $X_\alpha$ is not isomorphic to a complemented subspace  of $\cal F({K}_\beta)$.

\smallskip\noindent In particular, for any $\alpha\neq \beta \in [1,\omega_1)$, $\cal F({K}_\alpha)$ and $\cal F({K}_\beta)$ are non isomorphic.
\end{Corollary}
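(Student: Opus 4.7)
I would build $(X_\alpha,K_\alpha)_{\alpha<\omega_1}$ by transfinite induction on $\alpha$, invoking Theorem~\ref{main} at each step and distinguishing the $X_\alpha$'s through the Szlenk index $\Sz$. Recall that $\Sz$ is an isomorphism invariant, monotone under passing to complemented subspaces, and it attains every countable ordinal as its value on some separable Asplund Banach space; for instance $Y_\xi:=C([0,\omega^{\omega^\xi}])$ is separable Asplund with $\Sz(Y_\xi)=\omega^{\xi+1}$.

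Having constructed $(X_\gamma,K_\gamma)_{\gamma<\alpha}$, I set
\[
\tau_\alpha:=\sup\{\Sz(Z):Z\text{ Asplund, isomorphic to a complemented subspace of }\cal F(K_\gamma)\text{ for some }\gamma<\alpha\}.
\]
Provided $\tau_\alpha<\omega_1$ (the crux; see below), I pick $\xi_\alpha<\omega_1$ with $\omega^{\xi_\alpha+1}>\tau_\alpha$, set $X_\alpha:=Y_{\xi_\alpha}$, and apply Theorem~\ref{main} with $\eps=1$ to obtain $K_\alpha\subset X_\alpha$ homeomorphic to $C$ together with a linear embedding $R_\alpha$ and a projection $P_\alpha$ realising $X_\alpha$ as a complemented subspace of $\cal F(K_\alpha)$. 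Items (i) and (ii) follow immediately.

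For (iii), if $X_\alpha$ were isomorphic to a complemented subspace of $\cal F(K_\beta)$ for some $\beta<\alpha$, then since $X_\alpha=Y_{\xi_\alpha}$ is Asplund with $\Sz(X_\alpha)=\omega^{\xi_\alpha+1}$, the Szlenk index $\omega^{\xi_\alpha+1}$ would participate in the supremum defining $\tau_\alpha$, giving $\omega^{\xi_\alpha+1}\le\tau_\alpha$ and contradicting the choice of $\xi_\alpha$. The ``in particular'' assertion then follows at once: any isomorphism $\cal F(K_\alpha)\cong\cal F(K_\beta)$ with, say, $\alpha>\beta$ would transport the complemented copy of $X_\alpha$ inside $\cal F(K_\alpha)$ to a complemented copy of $X_\alpha$ inside $\cal F(K_\beta)$, contradicting (iii).

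\textbf{Main obstacle.} The nontrivial step is the countability of $\tau_\alpha$ at each stage, i.e.\ showing that for each $\gamma<\alpha$ the Szlenk indices of Asplund complemented subspaces of $\cal F(K_\gamma)$ are bounded by a countable ordinal. Since $K_\gamma\subset X_\gamma$ is an isometric inclusion, the functoriality of the Lipschitz-free construction yields an isometric linear embedding $\cal F(K_\gamma)\hookrightarrow\cal F(X_\gamma)$, and so it suffices to bound the Szlenk indices of separable Asplund subspaces of $\cal F(X_\gamma)$ in terms of $\Sz(X_\gamma)$. Such control is available via the asymptotic smoothness theory for free spaces over separable Asplund Banach spaces in the Godefroy--Kalton--Lancien circle of ideas, and this is the substantive ingredient beyond Theorem~\ref{main} that the argument relies on.
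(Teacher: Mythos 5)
Your overall architecture (transfinite induction, Theorem~\ref{main} at each stage, an isomorphic invariant to force non-complementability in the earlier free spaces) matches the paper's, but the invariant you choose creates a genuine gap at exactly the point you flag as the ``main obstacle'', and that gap cannot be closed by the tools you cite. The statement you need --- that for a fixed separable space $Z=\cal F(K_\gamma)$ the Szlenk indices of its Asplund complemented subspaces are bounded below $\omega_1$ --- is false for general separable $Z$: Pe{\l}czy\'nski's complementably universal space for Banach spaces with a Schauder basis is separable and contains complemented copies of every $C([0,\omega^{\omega^\xi}])$, $\xi<\omega_1$, so its Asplund complemented subspaces have Szlenk indices cofinal in $\omega_1$. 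Hence you would need a specific structural property of $\cal F(K_\gamma)$ ruling this out, and none is established. The appeal to ``asymptotic smoothness theory for free spaces over separable Asplund spaces'' does not deliver it: $\cal F(M)$ contains an isomorphic copy of $\ell_1$ whenever $M$ is infinite, so $\cal F(K_\gamma)$ and $\cal F(X_\gamma)$ are never Asplund and no Szlenk-type estimate on the ambient free space is available; likewise the descriptive-set-theoretic boundedness principle does not apply, since the collection of complemented Asplund subspaces of $Z$ is the intersection of an analytic set with a coanalytic one and need not be analytic. So $\tau_\alpha<\omega_1$ is unproved, and with it items (iii) and the final assertion.

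The paper avoids the issue entirely by using the Johnson--Szankowski theorem that no separable Banach space is complementably universal: given $\cal F(K_\beta)$, there is \emph{some} separable $Y$ that is not isomorphic to a complemented subspace of it, and one takes $X_{\beta+1}=X_\beta\oplus Y$ at successor steps and $X_\alpha=(\sum_{\gamma<\alpha}X_\gamma)_{\ell_1}$ at limit steps to preserve property (iii) for all earlier indices. This requires no quantitative invariant and no information about the internal structure of $\cal F(K_\beta)$ beyond its separability. If you want to keep a Szlenk-index-flavoured argument, you would have to first prove a nontrivial theorem about which $C(\alpha)$'s embed complementably into free spaces over compact subsets of $C(\alpha')$'s; as written, the proposal does not constitute a proof.
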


\begin{proof} This construction will be done by transfinite induction.
Let $X_1$ be any separable Banach space (set $X_1=c_0$ for instance). By Theorem \ref{main}, there exists a compact subset $K_1$ of $X_1$ which is homeomorphic to $C$ and such that $X_1$ is isomorphic to a complemented subspace of $\cal F(K_1)$. We now use the work of Johnson and Szankowski \cite{JS}, who showed that there is no separable Banach space that is complementably universal (see also Theorem 2.11 in \cite{HMVZ}). Therefore, there exists a separable Banach space $X_2$ such that $X_2$ is not isomorphic to a complemented subspace of $\cal F(K_1)$. Then, we use Theorem \ref{main} to find {a} compact subset $K_2$ of $X_2$ homeomorphic to $C$ and such that $X_2$ is  isomorphic to a complemented subspace of $\cal F(K_2)$.\\
Assume now that $(X_\gamma)_{\gamma <\alpha}$ and $(K_\gamma)_{\gamma <\alpha}$ have been constructed for some $\alpha<\omega_1$ and satisfy properties (i), (ii) and (iii). \\
If $\alpha=\beta+1$ is a successor ordinal{,} Johnson and Szankowski's theorem insures the existence of a separable Banach space $Y$ such that $Y$ is not isomorphic to a complemented subspace of $\cal F({K}_\beta)$. Then we set $X_{\beta+1}=X_\beta \oplus Y$. So $X_{\beta+1}$ is not isomorphic to a complemented subspace of $\cal F(K_\beta)$. It also follows from our induction hypothesis that for all $\gamma<\beta$, $X_{\beta+1}$ is not isomorphic to a complemented subspace of $\cal F(K_\gamma)$. Then, by Theorem \ref{main}, there exists a compact subset $K_{\beta+1}$ of $X_{\beta+1}$ homeomorphic to $C$ such that $X_{\beta+1}$ is  isomorphic to a complemented subspace of $\cal F(K_{\beta+1})$.\\
Finally, if $\alpha$ is a limit ordinal, we set $X_\alpha=(\sum_{\gamma<\alpha} X_\gamma)_{\ell_1}$. Let $\beta<\alpha$, then $X_\alpha$ contains a complemented copy of $X_\gamma$, for all  $\gamma\in (\beta,\alpha)$. Therefore it follows from our induction hypothesis that for all $\beta<\alpha$, $X_\alpha$ is not isomorphic to a complemented subspace of $\cal F(K_\beta)$. Finally, we use again Theorem \ref{main} to produce a compact subset $K_{\alpha}$ of $X_\alpha$ such that $X_{\alpha}$ is  isomorphic to a complemented subspace of $\cal F(K_{\alpha})$. This concludes the transfinite induction.
\end{proof}

\noindent{\bf Remark.} Using this idea and the isometric lifting property of separable Banach spaces established in the original paper by Godefroy and Kalton \cite{GK}, one can build a family of separable Banach spaces $(Y_\alpha)_{\alpha<\omega_1}$ such that the spaces $\cal F(Y_\alpha)$ are pairwise non isomorphic. As well, using the construction of Godefroy and Ozawa \cite{GO}, one can build a family of compact convex subsets of separable Banach spaces $(C_\alpha)_{\alpha<\omega_1}$ such that the spaces $\cal F(C_\alpha)$ are pairwise non isomorphic.

\section{Schur property and free spaces over countable compact spaces}\label{Schur}

In \cite{GO} Godefroy and Ozawa showed that for any separable Banach space $X$ there is a compact convex subset $K$ of $X$ such that $X$ is isometric to a complemented subspace of $\cal F(K)$. Since weak sequential completeness passes to subspaces , this implies the existence of a compact convex subset of $c_0$ whose free space is not weakly sequentially complete. Similarly, Theorem \ref{main} yields the existence of a compact metric space homeomorphic to the Cantor space whose free space is not weakly sequentially complete. We will show, that like for the approximation property, the situation is completely different for countable compact spaces.

\begin{Theorem}\label{FSchur} Let $K$ be a countable compact metric space. Then $\cal F(K)$ has the Schur property.
\end{Theorem}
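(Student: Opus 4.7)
I would prove Theorem \ref{FSchur} by transfinite induction on the Cantor--Bendixson rank $\alpha$ of $K$, which is a countable (in fact successor) ordinal since every countable compact metric space is scattered. The base case $\alpha=1$ --- finite $K$ --- is trivial as $\cal F(K)$ is then finite-dimensional, and finite-dimensional spaces are trivially Schur.

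For the inductive step, set $K':=K^{(1)}$, so $K'$ is a countable compact with CB rank $\alpha-1$; by the inductive hypothesis $\cal F(K')$ has the Schur property. The restriction map $\rho:\Lip_0(K)\to\Lip_0(K')$ is a norm-one surjection (McShane extension), so its adjoint $\iota:\cal F(K')\hookrightarrow\cal F(K)$ is an isometric linear embedding. Given a weakly null sequence $(\mu_n)$ in $\cal F(K)$, my aim is to extract a subsequence and produce a decomposition $\mu_n=\iota(\nu_n)+\rho_n$ with $\nu_n$ weakly null in $\cal F(K')$ and $\rho_n$ concentrated on the isolated points $K\setminus K'$. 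The inductive hypothesis yields $\|\nu_n\|\to 0$, while the residues $\rho_n$ lie in a subspace of $\cal F(K)$ equivalent to $\ell_1$: indeed, each isolated point $x\in K\setminus K'$ has a positive local scale $r_x:=d(x,K\setminus\{x\})$ and carries a natural little-Lipschitz bump function of amplitude $r_x$, and the associated normalised molecules are expected to form an unconditional $\ell_1$-like system. The Schur property of $\ell_1$ then forces $\|\rho_n\|\to 0$, which closes the induction.

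To construct the decomposition I would rely on Dalet's results \cite{D1,D2}: for countable proper (in particular countable compact) metric spaces $K$, the little-Lipschitz subspace $\text{lip}_0(K)$ is a $1$-norming predual of $\cal F(K)$, yielding the isometric identification $\cal F(K)=\text{lip}_0(K)^{*}$, and moreover $\cal F(K)$ enjoys the metric approximation property. The MAP supplies norm-one finite-rank operators $T_k\to\id$ strongly on $\cal F(K)$ whose ranges may be taken to sit inside $\text{span}\{\delta(x):x\in K_k\}$ for finite sets $K_k\subset K$, so each $T_k(\mu_n)$ splits canonically according to whether the support points lie in $K'$ or in $K\setminus K'$; a diagonal extraction over $n$ and $k$ then produces the sought subsequence and decomposition, with the $\text{lip}_0$-duality ensuring that weak convergence can be tested against little-Lipschitz test functions, which respects the split.

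The principal obstacle is verifying that the ``isolated-points'' residue $\rho_n$ is genuinely $\ell_1$-equivalent, so that weak nullity forces norm nullity. The difficulty is that isolated points of $K$ may lie at vastly different local scales and, more subtly, may accumulate along different CB levels, so the associated normalised molecules form an unconditional system equivalent to the standard $\ell_1$-basis only after careful control of cross-level interactions. This is the technical heart of the argument: after a suitable extraction, the $\rho_n$ must be arranged to live in a block-diagonal subspace generated by bumps at uniformly separated scales, where the $\ell_1$-estimate becomes effective. The positivity of $r_x$ for every isolated point, combined with the localisation afforded by the little-Lipschitz predual and the uniform MAP, is what makes this arrangement possible; once established, the Schur property of $\ell_1$ closes the induction and hence the proof.
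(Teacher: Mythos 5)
There are genuine gaps here, and the most serious one is structural: your induction does not terminate. You induct on the Cantor--Bendixson rank and pass from $K$ to $K'=K^{(1)}$, asserting that $K'$ has rank $\alpha-1$. This is only correct for finite $\alpha$. If $\alpha\ge\omega$ (take $K$ homeomorphic to the ordinal $\omega^\omega+1$, of rank $\omega+1$), then $(K')^{(\gamma)}=K^{(1+\gamma)}$ and $1+\gamma=\gamma$ for every infinite $\gamma$, so $K'$ has exactly the same rank as $K$ and the inductive hypothesis never applies. Any induction for this theorem must decompose $K$ into pieces whose rank genuinely drops, and peeling off one layer of isolated points does not achieve that. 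The paper's proof inducts instead on the statement ``$K^{(\alpha)}$ finite $\Rightarrow\ \cal F(K)$ has the Schur property'', reduces to the case $K^{(\alpha)}=\{0\}$, and decomposes $K$ into Kalton's annuli $C_k=\{0\}\cup\{x:\ 2^{k-1}\le d(x,0)\le 2^{k+1}\}$ around that single point: each $C_k$ omits a punctured neighbourhood of $0$, so $C_k^{(\alpha)}=\emptyset$ and compactness forces $C_k^{(\beta)}$ to be finite for some $\beta<\alpha$ --- that is where the complexity actually decreases.

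Second, the decomposition $\mu_n=\iota(\nu_n)+\rho_n$ you want does not exist in any bounded or canonical way. The isometric embedding $\iota:\cal F(K')\to\cal F(K)$ is fine, but its range is not known to be complemented, and ``the part of $\mu_n$ concentrated on the isolated points'' is ill-defined: the isolated points are dense in $K$, so the closed span of their Dirac measures is all of $\cal F(K)$. Splitting a finitely supported element according to whether its support points lie in $K'$ or in $K\setminus K'$ is not uniformly bounded in the free-space norm (the coordinate functional at a point $x$ has norm of order $1/d(x,K\setminus\{x\})$, which is unbounded over the isolated points), so the MAP/diagonal extraction does not produce the split you describe. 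Third, the $\ell_1$-lower estimate for the residues, which you correctly identify as the heart of the matter, is left entirely unproved, and as stated it is doubtful: molecules at isolated points accumulating to a point of $K'$ need not form an unconditional $\ell_1$-system. The paper obtains its $\ell_1$-lower bound from a different source, namely Kalton's lemma that elements supported on well-separated annuli $A_{r_1,s_1},\dots,A_{r_n,s_n}$ satisfy $\|\sum_k\gamma_k\|\ge\frac13\sum_k\|\gamma_k\|$, combined with a gliding-hump extraction: since each $\cal F(C_k)$ is Schur by induction, $\|T_k\gamma_i\|\to0$ for every $k$, so a weakly null sequence bounded below in norm admits a subsequence whose terms essentially live on disjoint blocks of annuli and is therefore equivalent to the $\ell_1$-basis, a contradiction. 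As it stands your argument cannot be repaired without replacing both the induction scheme and the decomposition.
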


Before we proceed with the proof, we need to recall a very useful decomposition due to N. Kalton (Lemmas 4.1 and 4.2 in \cite{K}). If $M$ is a pointed metric space and $k<l\in \Zdb$, let $M_k=\{x\in M,\ d(x,0)\le 2^k\}$, $A_{k,l}=\{0\}\cup\{x\in M, 2^{k}\le d(x,0)\le 2^l\}$ and $C_k=A_{k-1,k+1}$. Then Kalton proved the following two results.

\begin{Lemma}\label{kalton1} Suppose $r_1<s_1<..<r_n<s_n$ belong to $\Zdb$. Then for any $\gamma_1,..,\gamma_n$ such that for all $k\le n$, $\gamma_k \in \cal F(A_{r_k,s_k})$ we have that
$$\big\|\sum_{k=1}^n\gamma_k\big\|_{\cal F(M)}\ge \frac13\sum_{k=1}^n\|\gamma_k\|_{\cal F(M)}.$$
\end{Lemma}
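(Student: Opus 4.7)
My approach is a standard Lipschitz-duality argument: for each $k$ I would pick a near-extremal test function $f_k$ on the annulus $A_{r_k,s_k}$, glue the $f_k$'s into a single function $f$ on $\bigcup_k A_{r_k,s_k}$, observe that $f$ is $3$-Lipschitz by a short geometric computation, and then extend it to all of $M$ via McShane's extension theorem. Since the canonical map $\cal F(A_{r_k,s_k})\hookrightarrow \cal F(M)$ is isometric (a standard consequence of McShane applied to test functions), we have $\|\gamma_k\|_{\cal F(A_{r_k,s_k})}=\|\gamma_k\|_{\cal F(M)}$, and so for every $\eps>0$ we can choose $f_k\in \text{Lip}_0(A_{r_k,s_k})$ with $\|f_k\|_{\text{Lip}}\le 1$ and $\langle f_k,\gamma_k\rangle>\|\gamma_k\|-\eps$. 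Because $0\in A_{r_k,s_k}$ and $f_k(0)=0$, the bound $|f_k(x)|\le d(x,0)$ comes for free on the domain of each $f_k$.

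\textbf{The gluing step.} Define $f:\bigcup_{k=1}^n A_{r_k,s_k}\to\er$ by $f|_{A_{r_k,s_k}}=f_k$. The radial intervals $[2^{r_k},2^{s_k}]$ are pairwise disjoint since $s_k<r_{k+1}$, so the only common point of the sets $A_{r_k,s_k}$ is $0$, where every $f_k$ vanishes; thus $f$ is unambiguously defined. I would check that $\|f\|_{\text{Lip}}\le 3$ as follows. For $x,y$ in a common $A_{r_k,s_k}$ there is nothing to prove. For $x\in A_{r_k,s_k}\setminus\{0\}$ and $y\in A_{r_j,s_j}\setminus\{0\}$ with $k<j$, estimate
$$|f(x)-f(y)|\le |f_k(x)|+|f_j(y)|\le d(x,0)+d(y,0)=2d(x,0)+(d(y,0)-d(x,0)).$$
The key geometric observation is that $r_j,s_k\in\Zdb$ with $r_j>s_k$ force $r_j\ge s_k+1$, so $d(y,0)\ge 2^{r_j}\ge 2\cdot 2^{s_k}\ge 2\,d(x,0)$. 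This rewrites as $d(x,0)\le d(y,0)-d(x,0)\le d(x,y)$, and therefore $|f(x)-f(y)|\le 2\,d(x,y)+d(x,y)=3\,d(x,y)$.

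\textbf{Conclusion by duality.} McShane's theorem then produces $F\in\text{Lip}_0(M)$ with $F|_{\bigcup A_{r_k,s_k}}=f$ and $\|F\|_{\text{Lip}}\le 3$. Since $\gamma_k$ is supported in $A_{r_k,s_k}$, we have $\langle F,\gamma_k\rangle=\langle f_k,\gamma_k\rangle$, and duality gives
$$\Big\|\sum_{k=1}^n\gamma_k\Big\|_{\cal F(M)}\ge \frac{1}{\|F\|_{\text{Lip}}}\Big\langle F,\sum_{k=1}^n\gamma_k\Big\rangle\ge \frac{1}{3}\sum_{k=1}^n\langle f_k,\gamma_k\rangle\ge \frac{1}{3}\Big(\sum_{k=1}^n\|\gamma_k\|-n\eps\Big).$$
Letting $\eps\to 0$ concludes. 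The only subtle point is the $3$-Lipschitz estimate in the gluing step, and this is precisely where the integrality of the $r_j,s_k$ is used: the jump $r_j\ge s_k+1$ supplies a factor of $2$ between consecutive radial scales, which is exactly what turns $2d(x,0)+(d(y,0)-d(x,0))$ into $3d(x,y)$.
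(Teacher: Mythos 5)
Your proof is correct. Note that the paper does not prove this lemma itself --- it is quoted verbatim from Kalton's paper \cite{K} (Lemma 4.1 there) --- and your argument (norming each $\gamma_k$ by an $f_k$ with $|f_k(x)|\le d(x,0)$, gluing at the common point $0$, using the integrality of $s_k<r_j$ to get the factor $2$ between consecutive radial scales and hence the $3$-Lipschitz bound, then extending by McShane and dualizing) is essentially Kalton's original proof.
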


\begin{Proposition}\label{kalton2} For any $k\in \Zdb$ there exists an operator $T_k:\cal F(M)\to \cal F(C_k) \subset \cal F(M)$ such that
$$\forall \gamma \in \cal F(M)\ \ \gamma=\sum_{k\in \Zdb}T_k\gamma\ \ {\rm with}\ \  \sum_{k\in \Zdb}\|T_k\gamma\|_{\cal F(M)}\le \lambda \|\gamma\|_{\cal F(M)},$$
where $\lambda\ge1$ is a universal constant.
\end{Proposition}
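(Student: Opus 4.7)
The plan is to realize each $T_k$ as the pre-adjoint of a multiplication operator by a carefully chosen Lipschitz bump in the distance to the base point. Fix a piecewise linear partition of unity $(\psi_k)_{k\in\Zdb}$ of $(0,\infty)$ with $\mathrm{supp}(\psi_k)\subseteq[2^{k-1},2^{k+1}]$, $0\le\psi_k\le 1$, Lipschitz constant $O(2^{-k})$, and $\sum_k\psi_k\equiv 1$ on $(0,\infty)$; set $\psi_k(0)=0$. Define $\varphi_k\in\Lip_0(M)$ by $\varphi_k(x)=\psi_k(d(x,0))$. Then $\mathrm{supp}(\varphi_k)\subseteq C_k$, $\varphi_k(0)=0$, and $\sum_k\varphi_k(x)=1$ for $x\ne 0$.

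For $f\in\Lip_0(M)$ put $M_kf:=\varphi_k f$. Since $\varphi_k$ is supported in $C_k$, any $x\in C_k$ satisfies $d(x,0)\le 2^{k+1}$, so $|f(x)|\le 2^{k+1}\|f\|_{\Lip}$; combined with the Lipschitz constant $O(2^{-k})$ of $\varphi_k$, the standard product estimate yields $\|M_kf\|_{\Lip}\le C\|f\|_{\Lip}$ with $C$ universal, and $M_kf$ vanishes off $C_k$ so it lies naturally in $\Lip_0(C_k)$ (extended by zero). Since $M_k$ is easily seen to be weak-$*$ continuous (pointwise convergence on bounded sets), it has a pre-adjoint $T_k:\cal F(M)\to\cal F(M)$, and on Dirac masses $T_k\delta(x)=\varphi_k(x)\delta(x)\in\cal F(C_k)$, so $T_k$ maps into $\cal F(C_k)\subseteq\cal F(M)$.

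The core step is the $\ell_1$-bound, for which I would partition $\Zdb$ into residue classes mod $3$ in order to apply Lemma \ref{kalton1}. For any two indices $k<k'$ in the same class, $k'\ge k+3$, so the supporting annuli $A_{k-1,k+1}$ and $A_{k'-1,k'+1}$ satisfy $k+1<k'-1$, matching the strict-inequality hypothesis of Lemma \ref{kalton1}. Hence for each residue class $i\in\{0,1,2\}$,
$$\sum_{k\equiv i\ (3)}\|T_k\gamma\|\ \le\ 3\,\Big\|\sum_{k\equiv i\ (3)}T_k\gamma\Big\|.$$
Now $\sum_{k\equiv i\ (3)}T_k\gamma$ is the pre-adjoint image of $\gamma$ under the multiplication operator by $\Phi_i:=\sum_{k\equiv i\ (3)}\varphi_k$. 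To see that this multiplier is bounded on $\Lip_0(M)$ with a universal constant, I would split the Lipschitz estimate for $\Phi_i f$ into two cases: when $x,y$ lie in a common dyadic annulus $C_k$ the local Lipschitz constant of $\Phi_i$ is $O(2^{-k})$ and $|f(y)|\le 2^{k+1}\|f\|_{\Lip}$, giving the bound directly; when $x,y$ lie on widely different scales, $d(x,y)$ is comparable to the larger of $d(x,0),d(y,0)$, so $|(\Phi_i f)(x)-(\Phi_i f)(y)|\le|f(x)|+|f(y)|\lesssim d(x,y)\|f\|_{\Lip}$. Summing over the three residue classes yields $\sum_k\|T_k\gamma\|\le\lambda\|\gamma\|$ for a universal constant $\lambda$.

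With the $\ell_1$-bound in place, $S\gamma:=\sum_k T_k\gamma$ converges absolutely in $\cal F(M)$, and it suffices to verify $S\gamma=\gamma$ on the dense set of finite combinations of Dirac masses, where it reduces to the identity $\sum_k\varphi_k(x)\delta(x)=\delta(x)$ for $x\ne 0$ (and trivially for $x=0$). The main obstacle I anticipate is the multiplier bound for $\Phi_i$: unlike a single $\varphi_k$, the function $\Phi_i$ has unbounded support, so the naive Lipschitz estimate $\|\Phi f\|_{\Lip}\le\|\Phi\|_\infty\|f\|_{\Lip}+\|f\|_\infty\|\Phi\|_{\Lip}$ fails, and the two-scale analysis above is what makes the constant universal.
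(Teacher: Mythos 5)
The paper offers no proof of this proposition --- it is quoted from Kalton (Lemmas 4.1 and 4.2 of \cite{K}) --- so the comparison is really with Kalton's original argument, and your proposal is essentially a correct reconstruction of it. The construction of $T_k$ as the pre-adjoint of multiplication by a dyadic radial partition of unity $\varphi_k=\psi_k(d(\cdot,0))$, with $\supp\psi_k\subseteq[2^{k-1},2^{k+1}]$ and $\text{Lip}(\psi_k)=O(2^{-k})$, together with the weak$^*$-continuity argument giving the pre-adjoint, is exactly Kalton's. Where you take a slightly different route is the $\ell_1$-estimate: Kalton obtains $\sum_k\|T_k\gamma\|\le\lambda\|\gamma\|$ by dualizing directly (estimating $\|\sum_k\varphi_k f_k\|_{\Lip}$ for arbitrary normalized $f_k\in\Lip_0(C_k)$), whereas you split $\Zdb$ into residue classes mod $3$ and invoke Lemma \ref{kalton1}, reducing everything to the boundedness of the single multiplier $\Phi_i=\sum_{k\equiv i}\varphi_k$; in the context of this paper, where Lemma \ref{kalton1} is already on the table, this is an economical and perfectly valid alternative. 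Two points to tighten, neither of which is a genuine gap. First, Lemma \ref{kalton1} is stated for finitely many annuli, so you should apply it to finite subsets $F$ of a residue class and take a supremum; this forces the multiplier bound to be uniform over the partial sums $\sum_{k\in F}\varphi_k$, which it is. Second, your dichotomy ``common dyadic annulus'' versus ``widely different scales'' is not literally exhaustive; the clean split is $d(x,y)\ge\frac12\max(d(x,0),d(y,0))$, where $|(\Phi_if)(x)-(\Phi_if)(y)|\le|f(x)|+|f(y)|\lesssim d(x,y)\|f\|_{\Lip}$ suffices, versus $d(x,y)<\frac12\max(d(x,0),d(y,0))$, where $d(x,0)$ and $d(y,0)$ are comparable, only $O(1)$ of the $\psi_k$ are active on the interval between them, and the local Lipschitz constant $O(1/d(x,0))$ of $\Phi_i$ is compensated by $|f(x)|\le d(x,0)\|f\|_{\Lip}$. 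With these routine repairs the argument is complete.
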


\begin{proof}[Proof of Theorem \ref{FSchur}] We will prove by transfinite induction on $\alpha<\omega_1$, the following statement:\\
$(H_\alpha)$:
If $K^{(\alpha)}$ is finite, then $\cal F(K)$ has the Schur property (where $K^{(\alpha)}$ is the Cantor-Bendixon derived set of $K$ of order $\alpha$).

If $\alpha=0$, then $\cal F(K)$ is finite dimensional, so $(H_0)$ is clearly true.

Assume that $\alpha \in [1,\omega_1)$ and that $(H_\beta)$ is true for any $\beta <\alpha$. Consider now $K$ a compact metric space such that $K^{(\alpha)}=\{x_1,..,x_n\}$. It is then classical that there exist mutually disjoint clopen subsets $K_1$,..,$K_n$ of $K$ such that $K=\cup_{i=1}^n K_i$ and for all $i\le n$, $K_i^{(\alpha)}=\{x_i\}$. By compactness the $K_i$'s are uniformly far apart from each other. It then follows that $\cal F(K)$ is isomorphic to the $\ell_1$ sum of the $\cal F(K_i)$'s (see the proof of Theorem 3.1 in \cite{D1} for details). Therefore we may assume that $K^{(\alpha)}=\{0\}$, where $0$ is the origin of our pointed metric space.\\
Following the notation introduced for Lemmas \ref{kalton1} and \ref{kalton2}, we have that for any $k$ in $\Zdb$, $0$ is an isolated point of $C_k$. Therefore $C_k$ is a compact space such that $C_k^{(\alpha)}=\emptyset$. It follows from our induction hypothesis that $\cal F(C_k)$ has the Schur property. Consider now $(\gamma_i)_{i=1}^\infty$ a weakly null sequence in $\cal F(M)$. Then, we have that for any $k\in \Zdb$, the sequence $(T_k\gamma_i)_{i=1}^\infty$ is norm converging to $0$.\\
The rest of the proof goes by contradiction. So assume that $\|\gamma_i\|=\|\gamma_i\|_{\cal F(K)}$ does not tend to $0$. Then we will build a subsequence of $(\gamma_i)$ which is equivalent to the canonical basis of $\ell_1$, which is not weakly null. This will yield our contradiction. So, by extracting a first subsequence, we may assume that $\|\gamma_i\|\ge \delta$, for some $\delta >0$. Assume also, as we may, that the diameter of $K$ is less than 1. Fix $M>1$, whose (large) value will be made precise later.\\
Set $i_1=1$, $M_1=0$ and pick $N_1<0$ such that
$$\sum_{k<N_1} \|T_k\gamma_{i_1}\|<\frac{\delta}{M}.$$
We now choose inductively, using Proposition \ref{kalton2} and the fact that for all $k$ in $\Zdb$ $\lim_{i\to \infty}\|T_k\gamma_i\|=0$: $M_1=0>N_1>..>M_n>N_n$ and $i_1<..<i_n$ such that
$$\forall n\ge 0\ \ M_{n+1}=N_n-3\ \ {\rm and}\ \ \sum_{k\notin F_n} \|T_k\gamma_{i_n}\|<\frac{\delta}{M},\ {\rm where}\ \ F_n=[N_n,M_n].$$
Let now $(\alpha_n)$ be a finite sequence in $\Rdb$. We have that
\begin{equation}\label{eq1bis}
\big\|\sum_n\alpha_n\gamma_{i_n}\big\|=\big\|\sum_n\alpha_n(\sum_{k\in \Zdb}T_k\gamma_{i_n})\big\|\ge \big\|\sum_n\alpha_n(\sum_{k\in F_n}T_k\gamma_{i_n})\big\|-\frac{\delta}{M}\sum_n|\alpha_n|
\end{equation}
Since $\sum_{k\in F_n}T_k\gamma_{i_n} \in \cal F(A_{N_n-1,M_n+1})$ and $M_{n+1}=N_n-3$, it follows from Lemma \ref{kalton1} that
\begin{equation}\label{eq2bis}
\big\|\sum_n\alpha_n(\sum_{k\in F_n}T_k\gamma_{i_n})\big\|\ge \frac13\sum_n|\alpha_n|\,\big\|\sum_{k\in F_n}T_k\gamma_{i_n}\big\|\ge \frac13\big(\delta-\frac{\delta}{M}\big)\sum_n|\alpha_n|
\end{equation}
Combining inequalities (\ref{eq1bis}) and (\ref{eq2bis}), we get
$$\big\|\sum_n\alpha_n\gamma_{i_n}\big\|\ge \Big(\frac13\big(\delta-\frac{\delta}{M}\big)-\frac{\delta}{M}\Big)\sum_n|\alpha_n|\ge \frac{\delta}{6}\sum_n|\alpha_n|,$$
if $M$ was initially chosen large enough, which concludes our proof.

\end{proof}

\noindent{\bf Remark.} Gathering the results from \cite{D1} and Theorem \ref{FSchur}, we have that the free space over a countable compact metric space is a Schur space which is isometric to a dual space and has the metric approximation property. However it can still be quite complicated. Indeed, in \cite{CDW} an example of a compact subset $K$ of $\Rdb^2$ such that $K'=\{0\}$ and $\cal F(K)$ does not embed into $L^1$ is given.

\medskip
Recall that a proper metric space is a space whose balls are relatively compact. Then we can extend our result as follows.

\begin{Corollary} Let $M$ be a proper countable metric space. Then $\cal F(M)$ has the Schur property.
\end{Corollary}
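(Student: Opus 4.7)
The plan is to reduce to Theorem \ref{FSchur} and then replay its proof verbatim, the only new input being that the Kalton ``annuli'' $C_k$ are now compact thanks to properness.

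First, fix $k\in\Zdb$ and consider $C_k=\{0\}\cup\{x\in M:\ 2^{k-1}\le d(x,0)\le 2^{k+1}\}$. This is a closed subset of $M$ contained in the closed ball $M_{k+1}=\{x:\ d(x,0)\le 2^{k+1}\}$, which is compact because $M$ is proper. Hence $C_k$ is compact, and it is countable as a subset of the countable space $M$. By Theorem \ref{FSchur}, $\cal F(C_k)$ has the Schur property. Moreover $C_k$ embeds isometrically in $M$, so $\cal F(C_k)$ embeds isometrically into $\cal F(M)$, and the norm appearing in Kalton's decomposition is unambiguous.

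Next, let $(\gamma_i)_{i=1}^\infty$ be a weakly null sequence in $\cal F(M)$. For each $k$ the operator $T_k:\cal F(M)\to\cal F(C_k)$ supplied by Proposition \ref{kalton2} is bounded linear, hence weak-to-weak continuous, so $(T_k\gamma_i)_i$ is weakly null in $\cal F(C_k)$. The Schur property of $\cal F(C_k)$ therefore yields $\|T_k\gamma_i\|\to 0$ as $i\to\infty$, for every $k\in\Zdb$.

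With these two facts in hand, the proof of Theorem \ref{FSchur} transfers with no change: assume for contradiction that $\|\gamma_i\|\not\to 0$, pass to a subsequence with $\|\gamma_i\|\ge\delta>0$, fix a large parameter (to avoid notation clash with the metric space, call it $L>1$), and inductively select indices $i_1<i_2<\cdots$ together with windows $F_n=[N_n,M_n]\subset\Zdb$ satisfying $M_{n+1}=N_n-3$ and $\sum_{k\notin F_n}\|T_k\gamma_{i_n}\|<\delta/L$. This is possible because $\|T_k\gamma_i\|\to 0$ for each fixed $k$ and because the full sum $\sum_k\|T_k\gamma_{i_n}\|$ is controlled by $\lambda\|\gamma_{i_n}\|$, so a finite window captures all but mass $\delta/L$. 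The truncations $\sum_{k\in F_n}T_k\gamma_{i_n}$ live in $\cal F(A_{N_n-1,M_n+1})$ and these annuli are strictly nested in the sense required by Lemma \ref{kalton1}, so that lemma gives an $\ell_1$ lower bound $\tfrac13(\delta-\delta/L)\sum_n|\alpha_n|$. Combined with the tail estimate this provides, for $L$ large enough, an absolute lower bound $\tfrac{\delta}{6}\sum_n|\alpha_n|\le\|\sum_n\alpha_n\gamma_{i_n}\|$, so $(\gamma_{i_n})$ is equivalent to the unit vector basis of $\ell_1$, contradicting weak nullity.

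The only non-routine point is the compactness of each $C_k$, which is where properness is used; once that is secured, Theorem \ref{FSchur} supplies the inductive base and Kalton's two results carry the rest of the argument unchanged.
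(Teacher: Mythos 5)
Your reduction is sound as far as it goes: properness makes each $C_k$ (and each ball) compact and countable, so Theorem \ref{FSchur} gives the Schur property of $\cal F(C_k)$, and weak-to-weak continuity of $T_k$ gives $\|T_k\gamma_i\|\to 0$ for each fixed $k$. This is precisely the alternative route the paper mentions in one sentence at the end of its proof. The paper's actual proof is different and avoids any repetition of the gliding-hump argument: it invokes Kalton's Proposition 4.3, which embeds $\cal F(M)$ isomorphically into $\big(\sum_{k\in\Zdb}\cal F(M_k)\big)_{\ell_1}$ with $M_k$ the ball of radius $2^k$ (compact and countable by properness), and then uses that the Schur property passes to $\ell_1$-sums and to subspaces.

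Where your write-up has a genuine gap is in the claim that the proof of Theorem \ref{FSchur} ``transfers with no change.'' In that proof the diameter of $K$ is assumed to be less than $1$, so $T_k\gamma=0$ for $k\ge 1$; the windows are anchored at the top ($M_1=0$) and descend, and at each step the upper tail $\sum_{N_n-3<k\le 0}\|T_k\gamma_{i_{n+1}}\|$ is a \emph{finite} sum of terms each tending to $0$, which is exactly what permits the choice of $i_{n+1}$. For an unbounded proper $M$ (the only case not already covered by the theorem), the constraint $M_{n+1}=N_n-3$ forces you to control $\sum_{k>N_n-3}\|T_k\gamma_{i_{n+1}}\|$, an infinite sum that pointwise convergence $\|T_k\gamma_i\|\to 0$ does not control; and a single interval capturing all but $\delta/L$ of the mass of $\gamma_i$ may be forced to contain all previously chosen windows (consider a sequence whose mass splits into two humps drifting to $k\to-\infty$ and $k\to+\infty$), so the descending chain of interval windows you describe need not exist. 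The argument can be repaired: let each window $W_n$ be a finite union of intervals, obtained by removing from a large window of $\gamma_{i_n}$ the (finite) $3$-neighbourhood of all previously used indices, and apply Lemma \ref{kalton1} to all the resulting humps simultaneously, using $\big\|\sum_{k\in W_n}T_k\gamma_{i_n}\big\|\le\sum_{q}\big\|\sum_{k\in J_{n,q}}T_k\gamma_{i_n}\big\|$ for the decomposition of $W_n$ into intervals $J_{n,q}$. But that is an additional argument, not a verbatim transfer; the cleaner fix is the paper's $\ell_1$-sum embedding.
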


\begin{proof} Still using the notation introduced for Lemmas \ref{kalton1} and \ref{kalton2}, we have that for any $k\in \Zdb$, $M_k$ is a countable compact metric space. Thus Theorem \ref{FSchur} implies that $\cal F(M_k)$ has the Schur property. We can now use Proposition 4.3 in \cite{K} which insures that $\cal F(M)$ is isomorphic to a subspace of $(\sum_{k\in \Zdb} \cal F(M_k))_{\ell_1}$. This clearly implies that $\cal F (M)$ has the Schur property.\\
Note that we could also have made the following reasoning. Since the $C_k$'s are countable and compact, each $\cal F(C_k)$ is a Schur space. Thus we can reproduce the argument used in our inductive proof of Theorem \ref{FSchur} to show that $\cal F (M)$ is a Schur space.

\end{proof}

\medskip\noindent {\bf Acknowledgements.} We would like to thank Gilles Godefroy for introducing us to these questions and for many insightful discussions on the subject of Lipschitz-free spaces. We also thank Antonin Proch\'{a}zka for many useful conversations.

\end{document}